\documentclass[11pt,sumlimits,intlimits]{amsart}
\usepackage{url,cite,hyperref,amsmath,amsthm,amssymb,mathtools}
\usepackage[margin=1.25in]{geometry}
\usepackage{xcolor}
\usepackage{comment}
\usepackage[all]{xy}

\newtheorem{theorem}{Theorem}[section]

\newtheorem{corollary}[theorem]{Corollary}

\theoremstyle{definition}
\newtheorem{definition}[theorem]{Definition}

\newtheorem{example}[theorem]{Example}

\newcommand{\la}{\langle}
\newcommand{\ra}{\rangle}

\newcommand{\Z}{\mathbb{Z}}

\newcommand{\F}{\mathbb{F}}

\newcommand{\C}{\mathbb{C}}
\newcommand{\N}{\mathbb{N}}

\newcommand{\ve}{\varepsilon}

\numberwithin{equation}{section}

\title{Non-MF groups and non-finite full group C*-algebras}
\author{Caleb Eckhardt}
\address{Department of Mathematics, Miami University, Oxford, OH, 45056}
\email{eckharc@miamioh.edu}
\thanks{}
\begin{document}
\thanks{This work was partially supported by an AMS-Simons research enhancement grant for PUI faculty GR000765.}
\maketitle
\begin{abstract} Let $\Gamma$ be a property (T) group admitting an injective, non-surjective endomorphism and let $G$ be the associated ascending HNN-extension. Let $W=\left(\bigoplus_{G/\Gamma} \Z/2\Z\right) \rtimes G.$
We show that $W$ is not an MF group and that $C^*(G)$ is not a finite C*-algebra.  The ideas and proofs were generated by ChatGPT 5.6 Sol, we have only refined their arguments in a hopefully more palatable form.
\end{abstract}
\section{Acknowledgements and AI Statement} 
Scott Sauer also used ChatGPT to construct non-MF groups \cite{Sauer26}.  While waiting to see his examples, I used ChatGPT 5.6 Sol to construct examples using the non-sofic examples of Kun and Thom \cite{Kun26} that built on the OpenAI examples \cite{OpenAI26}  as a starting point. The ideas came from ChatGPT, I internalized them and then wrote this draft  so I take responsibility for any errors or inaccuracies. The examples presented here and by Scott's AI are very, very similar as are the proof ideas.  Scott is producing a Lean proof and my goal is expository.  I have no plan to publish this preprint. 
 My only goal was to write it in a manner that makes it easier for other mathematicians to  see the main idea.  
 
 In addition to Scott I would like to thank Alon Dogon, Francesco Fournier-Facio, Tatiana Shulman and Rufus Willett for helpful comments.  In particular, Rufus showed me that an earlier version of Theorem \ref{thm:nonfinite} actually provided examples of non-finite full group C*-algebras and Francesco showed me how to extend an earlier proof of Theorem \ref{thm:main} to greatly generalize the class of groups that are not MF.

\section{Introduction} 
Following \cite{Carrion13} we say that a discrete group is MF (matricially finite) if it embeds into the unitary group of the C*-algebra $\prod_{n=1}^\infty M_{d_n}/\oplus_{n=1}^\infty M_{d_n}$ where $(d_n)$ is a sequence of positive integers and $M_k$ denotes the $k\times k$ complex matrices. The term MF comes from Blackadar and Kirchberg's MF-algebras \cite{Blackadar97}--a C*-algebra is MF if it embeds into $\prod_{n=1}^\infty M_{d_n}/\oplus_{n=1}^\infty M_{d_n}.$   Another natural, and much stronger, definition of an MF-group would be that the reduced group C*-algebra is an MF C*-algebra\footnote{One may also further require that the canonical trace is well approximated see \cite[Definition 1.2]{Schafhauser26}}. Let's call this stronger property C*-MF just for this Introduction.  Clearly C*-MF implies MF, but the converse is false. Indeed, every residually finite group is MF  but showing that $C^*_r(\F_n)$ is C*-MF is a deep result \cite{Haagerup05} of Haagerup and Thorbj\o rnsen.  

There are a lot of MF groups.  It is more-or-less trivial that all LEF groups, and hence all maximally almost periodic groups are MF. Until \cite{Carrion13} I think that there were no non-trivial (i.e. non LEF) examples of MF-groups.  Carri\'{o}n, Dadarlat and I showed  \cite[Theorem 2.13]{Carrion13} that MF is closed under taking central quotients providing some easy-to-construct non-trivial  MF groups.  The breakthrough result of Tikuisis, White and Winter \cite{Tikuisis17} established that \emph{all} amenable groups are C*-MF.  Note that MF and C*-MF are actually equivalent for amenable groups (see \cite[Theorem 2.8]{Carrion13}).  We mention that in the non-amenable direction there has been significant progress on C*-MF groups, see for example \cite{Hayes15,  Louder25, Magee26, Rainone19, Schafhauser26}. 

My personal interest in these groups is the light they shed on operator algebras.
Six or so years ago a great open question was whether or not every stably finite C*-algebra is an MF C*-algebra (the converse is trivial).  The famous MIP$^*$=RE paper \cite{Ji20} implies the existence of a separable, stably finite, non-MF C*algebra (see \cite{Goldbring24} Proposition 6.1 and Remark 6.2). This technically solves the problem but it doesn't let us really get our hands on an example.  Here we provide explicit examples of  reduced group C*-algebras $C^*_r(W)$ that are (trivially) stably finite but not MF. 

Another fun open question was whether or not every full group C*-algebra $C^*(G)$ is a finite C*-algebra.  The closest thing we had was this nice observation \cite{Yamashita11} of Yamashita providing an example of a non-quasidiagonal full group C*-algebra.  As in the quasidiagonal case, it turned out to be very easy to produce examples of non-finite group C*-algebras--once you have the right groups (see Theorem \ref{thm:nonfinite}). This was a stepping stone in the eventual construction of non-MF groups. Let me provide some detail and insight into the beginning of this investigation.

Let $\Gamma$ be a non-coHopfian group (i.e. it admits an injective, non-surjective endomorphism) and let $G$ be the associated ascending HNN-extension.  Let $t\in G$ such that $H = t\Gamma t^{-1}<\Gamma.$  Then in any finite-dimensional representation $(\pi,V)$ of $G$ we have $V^H=V^\Gamma.$ The starting point of this investigation was to try and promote this idea to stably finite C*-algebras, i.e. if a representation factors through a stably finite C*-algebra do the fixed points of $H$ and $\Gamma$ have to be the same? The answer is ``no" in general,  but if we additionally add property (T) then the Kazhdan projections provide a way to encode fixed vectors algebraically in the maximal group C*-algebras, and maybe we have a chance.  It turns out this is enough to construct a non-finite group C*-algebra but not enough to construct non-MF groups. To construct the non-MF groups I pointed the machine at Kun and Thom's examples \cite{Kun26} hoping the non-soficity would rub off.  I think the argument it came up with, after refining, is ultimately elementary and clever.

Providing a proof outline requires introducing a bit of notation so I have placed the outline at the beginning of the proof.

\section{Non-finite group C*-algebras}
Recall that a unital C*-algebra $A$ is called finite if $x^*x=1$ implies $xx^*=1$ for all $x\in A.$ See \cite[Lemma 5.12]{Rordam00} for several useful reformulations of this definition. In particular if $A$ is a unital finite C*-algebra and a projection $p\in A$ is unitarily equivalent to a subprojection $q\leq p$ then we have $p=q.$ We refer the reader to \cite{Brown08} for information about group C*-algebras.

\label{sec:propfull} Let $\Gamma$ be a discrete property (T) group and $C^*(\Gamma)$ its full group C*-algebra.  The Kazhdan projection $p_\Gamma\in C^*(\Gamma)$ is the unique central projection satisfying: for each unitary representation $(\pi,\mathcal{H})$ of $\Gamma$ we have $\pi(p_\Gamma)(\mathcal{H}) = \mathcal{H}^\Gamma$, where $\mathcal{H}^\Gamma$ are the fixed vectors (see e.g. \cite[3.7.6]{Higson00}). We recall that for subgroups $H\leq \Gamma$ the natural inclusion $\C[H]\subseteq \C[\Gamma]$ extends to an inclusion $C^*(H)\subseteq C^*(\Gamma)$ since the induction of a unitary representation $\pi$ from $H$ to $\Gamma$ doesn't change the norm of an element of $\pi(\C[H]).$

\begin{theorem}\label{thm:nonfinite} Let $\Gamma$ be a non-coHopfian  property (T) group, i.e. there is an injective endomorphism $\phi:\Gamma\to \Gamma$ that is not surjective. Let $G=\la \Gamma, t | \phi(\gamma)=t\gamma t^{-1}; \gamma\in \Gamma \ra$ be the ascending HNN extension associated with $\phi.$
 Let $H=t\Gamma t^{-1}<\Gamma.$  Let $p_H,p_\Gamma\in C^*(\Gamma)$
be the Kazhdan projections.  Let $A$ be a unital finite C*-algebra and $\pi:C^*(G)\to A$ a unital *-homomorphism. Then $\pi(p_H)=\pi(p_\Gamma).$  In particular $C^*(G)$ is not a finite C*-algebra nor is $C^*(\tilde{G})$ for any $G\leq \tilde{G}.$  
\end{theorem}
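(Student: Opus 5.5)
Write $u_t\in C^*(G)$ for the unitary corresponding to $t$, and view $C^*(\Gamma)\subseteq C^*(G)$ via the inclusion recalled above. Since $H=t\Gamma t^{-1}\cong\Gamma$, $H$ also has property (T), so $p_H$ makes sense. Here $p_H$ lives in $C^*(H)\subseteq C^*(\Gamma)$.

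The plan is to show three things in $C^*(G)$:
\begin{enumerate}
\item $u_t p_\Gamma u_t^* = p_H$;
\item $p_\Gamma \le p_H$;
\item $p_\Gamma\neq p_H$.
\end{enumerate}
Then $p_H$ is unitarily equivalent to its subprojection $p_\Gamma$. Finiteness of $A$ forces $\pi(p_H)=\pi(p_\Gamma)$, using the reformulation from \cite[Lemma 5.12]{Rordam00} quoted earlier. Note that in the finite algebra the relevant projection pair is $\pi(p_H)$ and $\pi(p_\Gamma)$, with $\pi(u_t)$ implementing the equivalence.

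For (1), conjugation by $u_t$ restricts to a $*$-isomorphism $C^*(\Gamma)\to C^*(H)$ sending $u_\gamma\mapsto u_{t\gamma t^{-1}}$. It carries the Kazhdan projection of $\Gamma$ to the Kazhdan projection of $H$. The cleanest route is the uniqueness characterization: for any representation $\rho$ of $C^*(G)$ on $\mathcal H$, the range of $\rho(u_tp_\Gamma u_t^*)$ is $\rho(u_t)\mathcal H^\Gamma$. A vector $\xi$ is $\Gamma$-fixed iff $\rho(u_t)\xi$ is $t\Gamma t^{-1}$-fixed, so $\rho(u_t)\mathcal H^\Gamma=\mathcal H^H$, which is the range of $\rho(p_H)$. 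Taking $\rho$ faithful gives $u_tp_\Gamma u_t^*=p_H$.

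For (2), in any faithful representation we have $\mathcal H^\Gamma\subseteq\mathcal H^H$ because $H\le\Gamma$. Hence $\rho(p_\Gamma)\le\rho(p_H)$, so $p_\Gamma\le p_H$.

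The main obstacle is (3). I would exhibit a unitary representation of $G$ with an $H$-fixed vector that is not $\Gamma$-fixed. Take the quasi-regular representation of $G$ on $\ell^2(G/H)$. The vector $\delta_{H}$ is fixed by $H$, but moved by any $\gamma\in\Gamma\setminus H$, which exists because $\phi$ is not surjective. Hence $\rho(p_H)\delta_H=\delta_H$ while $\rho(p_\Gamma)\delta_H\ne\delta_H$, giving $p_H\neq p_\Gamma$. The same example shows that $\pi=\mathrm{id}$ violates the conclusion, so $C^*(G)$ is not finite.

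For $G\le\tilde G$, the inclusion $C^*(G)\subseteq C^*(\tilde G)$ recalled before the theorem is an injective unital $*$-homomorphism. Its image of $p_H$ still differs from its image of $p_\Gamma$. If $C^*(\tilde G)$ were finite, the first part applied to this inclusion would force the two images to be equal, a contradiction.
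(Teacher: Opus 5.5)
Your proof is correct and follows essentially the same route as the paper. Conjugation by $t$ carries $p_\Gamma$ to $p_H$, you get $p_\Gamma\le p_H$ from $\mathcal H^\Gamma\subseteq\mathcal H^H$, finiteness forces equality, and a quasi-regular representation separates $p_H$ from $p_\Gamma$. The differences are cosmetic: you use $\ell^2(G/H)$ rather than the paper's $\ell^2(\Gamma/H)$, and you deduce the statement for $\tilde G$ by applying the first part to the inclusion $C^*(G)\subseteq C^*(\tilde G)$ instead of citing that finiteness passes to unital subalgebras.
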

\begin{proof} We think of $A\subseteq B(\mathcal{H})$ for some Hilbert space $\mathcal{H}.$
Conjugation by $t$ gives an isomorphism of $C^*(\Gamma)$ onto $C^*(H)$ that carries $p_\Gamma$ to $p_H$ by uniqueness of Kazhdan projections. Hence $\pi(t)\pi(p_\Gamma)\pi(t)^*=\pi(p_H).$  Since $H\leq \Gamma$ we have $\mathcal{H}^\Gamma \subseteq \mathcal{H}^H$ hence $\pi(p_\Gamma)\leq \pi(p_H).$  
Hence $\pi(p_H)$ is unitarily equivalent to $\pi(p_\Gamma)$ which is a subprojection of $\pi(p_H)$.  Since $A$ is finite we have $\pi(p_H)=\pi(p_\Gamma).$

Finally notice that $p_H\neq p_\Gamma.$ Indeed the left quasi-regular representation on $\ell^2(\Gamma/H)$ is a unitary representation with different sets of fixed vectors for $H$ and $\Gamma.$  Hence $C^*(G)$ is not finite. Since finiteness is a hereditary property we have $C^*(\tilde{G})$ is not finite for any $G\leq \tilde {G}.$

\end{proof}

\begin{example}\label{ex:nonfinite} Examples of non finite full group C*-algebras.  By Theorem \ref{thm:nonfinite} we only need to point to examples of non-coHopfian property (T) groups. The group $\Gamma = \Z^3 \rtimes \operatorname{SL}_3(\Z)$ is such an example (see \cite[Proposition 1.1]{Cornulier07} for this example and more).  See also
  \cite[Theorem 1.4]{Ollivier07}. 
\end{example}

\section{Non-MF groups}
\begin{definition} A discrete group $G$ is called MF if there is a sequence of natural numbers $d=(d_n)$ and a faithful homomorphism from $G$ into the unitary group of the C*-algebra 
\begin{equation*}
\mathcal{Q}_{d} = \prod_{n=1}^\infty M_{d_n} /\bigoplus_{n=1}^\infty M_{d_n}.
\end{equation*}
\end{definition}

We consider generalized wreath products similar to those in the preprint of Kun and Thom \cite{Kun26}. We recall the details below

\begin{definition}\label{def:W} Let $\Gamma$ be a property (T) group contained in $G$ and suppose $t\in G$ satisfies $t\Gamma t^{-1} <\Gamma.$   Set $H=t\Gamma t^{-1}.$ Let $A=\bigoplus_{G/\Gamma} \Z/2\Z$ and let $G$ act as permutations.  Set $W=A\rtimes G.$ For each $x\in G/\Gamma$ let $a_x\in A$ be the coordinate function that is non-trivial at $x.$

\end{definition}
\begin{theorem}\label{thm:main} Let $W$ be the group defined in Definition \ref{def:W}.  Let $\gamma \in \Gamma\setminus H.$  
Set $b_\gamma=a_{\gamma t\Gamma}a_{t\Gamma}.$  Since $\gamma t \Gamma \neq t\Gamma$ we have $b_\gamma\neq 1.$ Suppose that $\pi:W\to U(\mathcal{Q}_d)$ is a homomorphism.  Then $\pi(b_\gamma)=1.$  In particular $W$ is not MF nor is any group containing $W.$
\end{theorem}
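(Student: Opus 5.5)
The plan is to turn the non-finiteness mechanism of Theorem \ref{thm:nonfinite} into a rigidity statement about the conjugation action of $G$ on $\mathcal{Q}_d$. The first observation is that $\mathcal{Q}_d$ is finite. Indeed, if $x^*x=1$ in $\mathcal{Q}_d$, lift $x$ to $(x_n)$ with $\|x_n^*x_n-1\|\to 0$. Each $x_n$ is then an approximate isometry of a finite-dimensional space, so it is close to a unitary and $\|x_nx_n^*-1\|\to0$. The proof of Theorem \ref{thm:nonfinite} only used $t\Gamma t^{-1}=H<\Gamma$, property (T) of $\Gamma$, and finiteness of the target, so it applies verbatim here. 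Hence for every unital representation $\rho$ of $C^*(G)$ into a finite algebra (in particular into any $\mathcal{Q}_{d'}$) we get $\rho(p_H)=\rho(p_\Gamma)$.

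Next I would bring in the lamplighter elements. The stabilizer of $t\Gamma\in G/\Gamma$ is $t\Gamma t^{-1}=H$, so $a_{t\Gamma}$ commutes with $H$, and $\gamma a_{t\Gamma}\gamma^{-1}=a_{\gamma t\Gamma}$. Thus $\pi(b_\gamma)=1$ is equivalent to $u:=\pi(a_{t\Gamma})$ being fixed by $\mathrm{Ad}\,\pi(\gamma)$. In other words, I must show that an element of $\mathcal{Q}_d$ fixed under conjugation by $\pi(H)$ is fixed under conjugation by $\pi(\Gamma)$. To do this I will realize the conjugation action as a representation into another finite algebra $\mathcal{Q}_{d'}$. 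Choose lifts $U_n(g)\in U(d_n)$ of $\pi(g)$ and set $\rho_n(g)=U_n(g)\otimes\overline{U_n(g)}$, acting on $M_{d_n}\cong\C^{d_n}\otimes\overline{\C^{d_n}}$ by $X\mapsto U_n(g)XU_n(g)^*$. The family $(\rho_n)$ is asymptotically multiplicative, so it defines a homomorphism $\rho:G\to U(\mathcal{Q}_{d^2})$. Since $\mathcal{Q}_{d^2}$ is finite, the first step gives $\rho(p_H)=\rho(p_\Gamma)$.

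The remaining task is to convert this projection equality into the statement about $u$. Property (T) supplies an element $f\in\C[\Gamma]$ (with the analogous element for $H$) whose image approximates $p_\Gamma$ in norm, uniformly over all representations. This gives a uniform spectral gap: in every unitary representation, a vector moved by at most $\delta$ by the Kazhdan generators is within $C\delta$ of an invariant vector. I would apply this levelwise to $\rho_n$ with $X_n$ a lift of $u$. Approximate $H$-invariance of $X_n$ places $X_n$ near the range of $\rho_n(p_H)$. The identity $\rho(p_H)=\rho(p_\Gamma)$ then places it near the range of $\rho_n(p_\Gamma)$, so $\|U_n(\gamma)X_nU_n(\gamma)^*-X_n\|$ is small.

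\textbf{Main obstacle.} The last step must be carried out in operator norm, not merely in the normalized Hilbert--Schmidt norm. The Kazhdan-projection argument naturally controls $\|\cdot\|_2$ on $M_{d_n}$, but the quotient $\mathcal{Q}_d$ sees $\|\cdot\|_\infty$. My first attempt to close this gap is to work with the Hilbert space $\C^{d_n}\otimes\overline{\C^{d_n}}\otimes\C^{d_n}$ and tensor vectors $\xi\otimes\bar\eta$ in place of the single vector $X_n$. The goal is to get $|\langle (U_n(\gamma)X_nU_n(\gamma)^*-X_n)\xi,\eta\rangle|$ small uniformly over unit vectors $\xi,\eta$, using the norm identity $\rho(p_H)=\rho(p_\Gamma)$ in $\mathcal{Q}_{d^2}$ together with $X_n$ commuting approximately with $\rho_n(H)$. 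If that fails, I would instead apply the finiteness argument to a corner or a matrix amplification $M_2(\mathcal{Q}_{d^2})$ of $\rho$ that encodes $u$ directly. Once $\pi(b_\gamma)=1$ is established, non-MF-ness follows: $b_\gamma\neq 1$ lies in the kernel of every homomorphism $\pi$, so no $\pi$ is faithful. The same holds for any group containing $W$, by restricting a homomorphism of the larger group to $W$.
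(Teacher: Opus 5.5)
The gap is the step you flag as the main obstacle. That step is essentially the whole content of the theorem, and your outline does not supply it.

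Your preliminary steps are correct. $\mathcal{Q}_d$ is finite. The proof of Theorem~\ref{thm:nonfinite} applies to any unital representation of $G$ into a finite algebra. And $\pi(b_\gamma)=1$ is equivalent to $u=\pi(a_{t\Gamma})$ commuting with $\pi(\gamma)$.

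\textbf{Why the projection identity is not enough.} All that $\rho(p_H)=\rho(p_\Gamma)$ gives you is Hilbert-space information about the conjugation representation on $(M_{d_n},\|\cdot\|_2)$. In $\prod_\omega(M_{d_n},\|\cdot\|_2)$ the vector $[X_n]$ is $H$-fixed, hence $\Gamma$-fixed. That only says $\|U_n(\gamma)X_nU_n(\gamma)^*-X_n\|_2\to 0$ in the normalized Hilbert--Schmidt norm. This is exactly the naive approach the paper says fails, because it is compatible with $\pi(b_\gamma)\neq 1$. If the lifts of $\pi(a_{\gamma t\Gamma})$ and $u$ are commuting symmetries whose product has a $(-1)$-eigenspace of bounded dimension, their difference has operator norm $2$ but normalized HS norm $O(d_n^{-1/2})$.

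Kazhdan projections only control Hilbert-space representations. They give no handle on the conjugation action on the Banach space $(M_{d_n},\|\cdot\|_\infty)$. Your fallbacks (uniform testing against $\xi\otimes\bar\eta$, corners, $M_2$-amplifications) are not carried out. None of them explains how a fixed-vector statement, in a normalization where $X_n$ is a bounded vector, could detect a defect of rank $o(d_n)$.

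You have also reduced to a much stronger claim than needed: that every $\mathrm{Ad}\,\pi(H)$-fixed element of $\mathcal{Q}_d$ is $\mathrm{Ad}\,\pi(\Gamma)$-fixed. Your argument uses nothing about $u$ beyond $H$-invariance. The paper does not prove this general claim, and its argument uses the special structure of $u$ essentially.

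\textbf{What the paper does instead.} The missing idea is a normalization in which such a defect has size comparable to $1$, and that requires exact structure.
\begin{itemize}
\item The paper argues by contradiction from $\pi(b_{\gamma_0})\neq 1$. Loring's weak semiprojectivity of $C^*(A)\cong C(X)$ lets one lift $\pi|_{A}$ to genuinely commuting symmetries $\pi_n(a_x)$.
\item Then $\pi_n(a_x)-\pi_n(a_y)$ has spectrum in $\{-2,0,2\}$. So its \emph{unnormalized} HS norm is exactly $G$-invariant for large $n$, and $\pi(b_{\gamma_0})\neq 1$ gives the dimension-free bound $\|\pi_n(a_{\gamma_0.x_0})-\pi_n(a_{x_0})\|_{\text{HS}}\ge 2$ along a subsequence.
\item One rescales the HS norm by $k_n$, the maximum of these quantities over $\gamma_0,\dots,\gamma_m$, which together with $H$ generate $\Gamma$. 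In this normalization $u$ itself is no longer a bounded vector, so one cannot look for fixed vectors directly.
\item Instead, $c_n(g)=\pi_n(a_{g.x_0})-\pi_n(a_{x_0})$ defines a $1$-cocycle $c$ on $\Gamma$ in the ultraproduct of the rescaled spaces. Exact invariance and the triangle inequality bound $\|c_n(g)\|_{\text{HS}}$ by a multiple of $k_n$, so $c$ is well defined. It vanishes on $H$ and has norm $1$ at some $\gamma_i$.
\item Verifying the cocycle identity in the rescaled norm uses $\|D\|_{\text{HS}}^2\le\|D\|\operatorname{Tr}|D|$ and $\operatorname{Tr}|u-v|=\tfrac12\|u-v\|_{\text{HS}}^2$ for commuting symmetries. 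This again requires the exact lift.
\item Property (T) is then used a second time, via Delorme--Guichardet, to write $c(g)=(\rho\circ\sigma)(g)\eta-\eta$. The vector $\eta$ is $H$-fixed but not $\gamma_i$-fixed. This contradicts Theorem~\ref{thm:nonfinite} applied to the finite algebra $\prod B(\mathcal{K}_n)/\bigoplus B(\mathcal{K}_n)$.
\end{itemize}
Your outline contains none of these ingredients (exact commuting lift, unnormalized rescaling, cocycle plus Delorme--Guichardet), so it does not reach $\pi(b_\gamma)=1$. Your final deduction of non-MF-ness from $\pi(b_\gamma)=1$ is fine.
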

 \begin{proof}  Now that the notation is set we outline the proof.
\newline 
Since $\mathcal{Q}_d$ is a finite C*-algebra  the image of the Kazhdan projections $\pi(p_H)=\pi(p_\Gamma)$ are equal by Theorem \ref{thm:nonfinite}. Therefore for any representation $(\rho,\mathcal{K})$ of $\mathcal{Q}_d$ we have $\mathcal{K}^H=\mathcal{K}^\Gamma$ (applied to the representation $\rho\circ\pi$).  The idea is to assume we have a representation $\pi$ with $\pi(b_\gamma)\neq 1$ and then exhibit a representation $\rho$ of $\mathcal{Q}_d$ with $\mathcal{K}^H\neq\mathcal{K}^\Gamma.$
To achieve this we replace $\pi|_G$ with coordinate-wise conjugation action in a weighted HS-norm. This provides a new representation $\sigma$ of $G$ on $\mathcal{Q}_{d'}$ (for some new sequence $d'$). Introducing the coordinate wise conjugation is done to mimic the way $G$ acts on $A$ and in particular to try and produce a vector fixed by $H$ but not $\Gamma.$  Certain difficulties arise (explained in the proof) that thwart the naive approach of building such a vector.  To circumvent this difficulty we instead construct a 1-cocycle $c$ for $\sigma|_\Gamma$ that is trivial on $H$ but not $\Gamma.$  Then property (T) enters a second time to force $c$ to be a coboundary thus producing a vector fixed by $H$ but not by $\Gamma.$

We now begin the proof.  Fix $\gamma_0\in \Gamma\setminus H$ and let $\pi:W\to U(\mathcal{Q}_d)$ be a homomorphism. Assume that $\pi(b_{\gamma_0})\neq 1.$ We use $\pi$ for the representation of $W$ and the linearized representation $\pi:C^*(W)\to \mathcal{Q}_d.$

Note that $C^*(A)\cong C(X)$ with $X$ a Cantor space.  Loring showed in \cite{Loring98} that $C(X)$ is weakly semiprojective hence there is a *-homomorphic lifting $\tilde{\pi}:C^*(A)\to \prod_{n=1}^\infty M_{d_n} $ of $\pi|_{C^*(A)}.$  It follows that there are unital maps $\pi_n:W\to U(M_{d_n})$ such that
\begin{equation}\label{eq:lifting}
\| \pi_n(gh)-\pi_n(g)\pi_n(h) \| \to 0 \text{ for all }g,h\in W \quad \text{ and }\quad \pi_n(a_xa_y)=\pi_n(a_x)\pi_n(a_y) \text{ for all }x,y \in G/\Gamma.
\end{equation}
Hence for all $g\in G$ and $x\in G/\Gamma$ we have
\begin{equation}\label{eq:conj}
 \|  \pi_n(g)\pi_n(a_x)\pi_n(g)^* - \pi_n(a_{g.x}) \|\to 0.
\end{equation}
Let $g\in G$ and $x,y\in G/\Gamma.$  Then (\ref{eq:conj}) implies that for all sufficiently large $n$, depending on $g,x,y$,  we have 
\begin{equation*}
\| \pi_n(g)(\pi_n(a_x)-\pi_n(a_y))\pi_n(g)^* - (\pi_n(a_{g.x})-\pi_n(a_{g.y})) \| <2.
\end{equation*}
Since the self-adjoint matrices $\pi_n(g)(\pi_n(a_x)-\pi_n(a_y))\pi_n(g)^*$ and  $\pi_n(a_{g.x})-\pi_n(a_{g.y})$ both have spectrum contained in $\{ -2,0,2 \}$ and they are within 1 of each other their corresponding eigenspaces have the same dimension. Therefore they are unitarily equivalent. In particular we have for all sufficiently large $n$
\begin{equation}\label{eq:invariance}
\|  \pi_n(a_x)-\pi_n(a_y) \|_{\text{HS}}  =   \| \pi_n(a_{g.x})-\pi_n(a_{g.y}) \|_{\text{HS}},
\end{equation}
 where $\| \cdot \|_{\text{HS}}$ denotes the unnormalized Hilbert-Schmidt norm.

Let $x,y\in G/\Gamma.$  Then $\pi_n(a_x)\pi_n(a_y)$ is a self-adjoint unitary.  Therefore if $\pi_n(a_x)\pi_n(a_y)\neq 1$ then we have 
\begin{equation}\label{eq:HSest}
\|  \pi_n(a_x) - \pi_n(a_y) \|_{\text{HS}} = \|  \pi_n(a_x)\pi_n(a_y) - 1 \|_{\text{HS}}\geq 2.
\end{equation}
Now set $x_0 = t\Gamma \in G/\Gamma.$  By our contradiction hypothesis there are infinitely many values of $n$ so $\pi_n(a_{\gamma_0.x_0})\pi_n(a_{x_0}) = \pi(b_{\gamma_0})\neq 1.$ Therefore by (\ref{eq:HSest}) there are infinitely many values of $n$ so  $\|  \pi_n(a_{\gamma_0.x_0}) - \pi_n(a_{x_0}) \|_{\text{HS}}\geq 2.$

By considering a subsequence of $(d_n)$ we may, without loss of generality, assume that 
\begin{equation}\label{eq:HSgap}
\|  \pi_n(a_{\gamma_0.x_0}) - \pi_n(a_{x_0}) \|_{\text{HS}}\geq 2  \text{ for all } n.
\end{equation}
We also record the following consequence of (\ref{eq:invariance}) and the triangle inequality for further use:  Let $g,h\in G$ then for sufficiently large $n$ (dependent on $g$ and $h$) we have
\begin{equation}\label{eq:triangleineq}
\|  \pi_n(a_{gh.x_0}) - \pi_n(a_{x_0}) \|_{\text{HS}} \leq \|  \pi_n(a_{g.x_0}) - \pi_n(a_{x_0}) \|_{\text{HS}} + \|  \pi_n(a_{h.x_0}) - \pi_n(a_{x_0}) \|_{\text{HS}}
\end{equation}

We now build our representation $(\sigma, \mathcal{K})$ of $G$ that factors through a finite C*-algebra with $\mathcal{K}^H\neq \mathcal{K}^\Gamma$ thus providing our contradiction. 

We pause to point out where a naive attempt at a contradiction fails.  Consider the vector $\eta=(\pi_n(a_{x_0}))$ viewed as an element of the ultraproduct in the normalized Hilbert-Schmidt norm. View $\pi(G)$ as acting via conjugation.  Then $\eta$ is clearly fixed by $H$ and is a natural choice for an element not fixed by $\gamma_0.$  Indeed in the \emph{operator norm} this is precisely what happens. But there is no reason for this to work in the normalized HS-norm.  The following is the AI's workaround to this roadblock.

Since $\Gamma$ has property (T) it is finitely generated.  Choose $\gamma_1,...,\gamma_m\in \Gamma\setminus H$ such that
\begin{equation*}
\Gamma = \la H, \gamma_0,...,\gamma_m \ra.
\end{equation*}
For each $n$ set
\begin{equation*}
k_n:=\max_{i=0,...,m}\|  \pi_n(a_{\gamma_i.x_0}) - \pi_n(a_{x_0}) \|_{\text{HS}} \geq 2, \quad \text{ by (\ref{eq:HSgap})}.
\end{equation*}
Notice that by (\ref{eq:invariance}) we have for sufficiently large $n$ and all $i=0,...,m$
\begin{equation*}
\|  \pi_n(a_{\gamma_i^{-1}.x_0}) - \pi_n(a_{x_0}) \|_{\text{HS}} = \|  \pi_n(a_{\gamma_i.x_0}) - \pi_n(a_{x_0}) \|_{\text{HS}}
\end{equation*}

Let $g\in \Gamma.$  Then there are $h_0,...,h_r\in H$ and indices $i_1,...,i_r$ and $\ve_1,...,\ve_r\in \{  -1,1 \}$ so
\begin{equation}\label{eq:gproduct}
g = h_0\gamma_{i_1}^{\ve_1}h_1\gamma_{i_2}^{\ve_2}\cdots \gamma_{i_r}^{\ve_r}h_r.
\end{equation}
For all $h\in H$ we have $h.x_0 = ht\Gamma = t\Gamma=x_0$ hence for all $n$ we have $\pi_n(a_{x_0}) = \pi_n(a_{h.x_0}).$ We combine this observation with the above product formula for $g$ and repeated use of the triangle inequality estimate (\ref{eq:triangleineq}) to obtain for sufficiently large $n$ (here $n$ depends on $g$ and the chosen product formula for $g$)
\begin{equation}\label{eq:HSnormaction}
\| \pi_n(a_{g.x_0}) - \pi_n(a_{x_0}) \|_{\text{HS}} \leq rk_n.
\end{equation}
For each $n$ define $\mathcal{K}_n$ to be $M_{d_n}$ with rescaled HS norm
\begin{equation*}
\| T \|_{\mathcal{K}_n} = \frac{\| T \|_{\text{HS}}}{k_n}.
\end{equation*}
For each $g\in G$ define $\sigma_n(g) = \operatorname{Ad} \pi_n(g) \in U(\mathcal{K}_n).$ Note that for any $U,V\in U(d_n)$ and $T\in \mathcal{K}_n$ we have
\begin{align*}
\|  UTU^*-VTV^*\|_{\mathcal{K}_n} &\leq \| (U-V)TU^* \|_{\mathcal{K}_n} + \| VT(U^*-V^*) \|_{\mathcal{K}_n}\\
& \leq 2\| U-V \|\| T \|_{\mathcal{K}_n}.
\end{align*}
It follows that since $(\pi_n, \C^{d_n})_{n=1}^\infty$ is an approximate representation of $G$ that $(\sigma_n,\mathcal{K}_n)_{n=1}^\infty$ is also an approximate representation of $G.$  We then define $\sigma:= (\sigma_n):G\to \prod B(\mathcal{K}_n)/\oplus B(\mathcal{K}_n).$

Let $\omega$ be a non-principal ultrafilter on $\N$ and let $\mathcal{K} = \prod_\omega \mathcal{K}_n$ be the Hilbert space ultraproduct.   
Define $\rho:\prod B(\mathcal{K}_n)\to B(\mathcal{K})$ by
\begin{equation*}
\rho((T_n)_{n=1}^\infty)([\eta_n]) =  [T_n(\eta_n)].
\end{equation*}
If $\lim_{n\to \infty}\| T_n \|=0$ then clearly $\rho( (T_n)_{n=1}^\infty ) = 0$ so $\rho$ descends to a well defined homomorphism from $\prod B(\mathcal{K}_n)/\oplus B(\mathcal{K}_n)$ to $B(\mathcal{K}).$ It is a linear algebra exercise to show that $\prod B(\mathcal{K}_n)/\oplus B(\mathcal{K}_n)$ is a finite C*-algebra  hence the representation $\rho\circ\sigma:G\to B(\mathcal{K})$ factors through a finite C*-algebra. 

Next we build a cocycle $c:\Gamma\to \mathcal{K}$ for the representation $\rho\circ \sigma|_\Gamma.$  For each $n$ and $g\in\Gamma$ define $c_n(g) = \pi_n(a_{g.x_0})-\pi_n(a_{x_0})\in \mathcal{K}_n.$

Let $g\in \Gamma$ and assume it may be written as in (\ref{eq:gproduct}). By (\ref{eq:HSnormaction}) we have 
\newline
$\limsup_{n\to\infty} \| c_n(g)  \|_{\mathcal{K}_n} \leq r$ so  the following map $c:\Gamma\to \mathcal{K}$ is well defined
\begin{equation}
c(g) = [ c_n(g) ].
\end{equation}
Next we show $c$ satisfies the cocycle identity.  Let $h,g\in \Gamma.$ Since $\pi_n$ are approximately norm multiplicative we have
\begin{equation}\label{eq:opnormcocycle}
\| c_n(hg)-(c_n(h)+\sigma_n(h)c_n(g))  \| \to 0.
\end{equation}
We  show the same holds when operator norm is replaced with $\| \cdot \|_{\mathcal{K}_n}.$ Again assume $g$ is decomposed as in (\ref{eq:gproduct}). 
Let $u,v$ be commuting self-adjoint unitary matrices.  Then we have $|u-v|=\frac{1}{2}(u-v)^2.$  Hence $\operatorname{Tr}|u-v|=\frac{1}{2}\| u-v \|_{\text{HS}}^2.$ Since $c_n(hg)-c_n(h)$ is  a difference of commuting self-adjoint matrices we apply this calculation, (\ref{eq:invariance}) and  (\ref{eq:HSnormaction}) to obtain for sufficiently large $n$ (depending on $h,g$ and the product formula for $g$)
\begin{equation*}
\operatorname{Tr}| c_n(hg)-c_n(h) | \leq \frac{1}{2}\|  c_n(hg)-c_n(h) \|_{\text{HS}}^2 \leq \frac{1}{2}r^2k_n^2 
\end{equation*}
Similarly, $\operatorname{Tr}|\sigma_n(h)c_n(g)|\leq \frac{1}{2}r^2k_n^2.$ Set $D_n(h,g)=c_n(hg)-(c_n(h)+\sigma_n(h)c_n(g)).$ Then
 \begin{align*}
 \| D_n(h,g)  \|^2_{\mathcal{K}_n} & = \frac{1}{k_n^2}\operatorname{Tr}|D_n(h,g)  |^2\\
 &\leq \frac{1}{k_n^2}\left(\| D_n(h,g) \| \operatorname{Tr}|D_n(h,g)|\right)\\
 &\leq \frac{1}{k_n^2}\| D_n(h,g)\| \left(  \operatorname{Tr}|c_n(hg)-c_n(h))| + \operatorname{Tr}|\sigma_n(h)c_n(g)| \right)\\
 &\leq \frac{1}{k_n^2}\| D_n(h,g) \|r^2k_n^2 \to 0, \hspace{0.3in} \text{ by  (\ref{eq:opnormcocycle}) }
 \end{align*}
Therefore $c$ is a 1-cocycle.  Notice that for every $h\in H$ we have $c(h)=0$ because $h.x_0=x_0.$
We now show $c(\gamma_i)\neq0$ for some $i=0,1,...,m.$  For each $i=0,...,m$ consider the set
\begin{equation*}
Y_i = \{ n\in\N: k_n= \| \pi_n(a_{\gamma_i.x_0})-\pi_n(a_{x_0}) \|_{\text{HS}}\}.
\end{equation*}
Since $Y_0\cup \cdots \cup Y_m =\N$ there is an index $i$ so $Y_i\in \omega.$  It follows that $c(\gamma_i)\in \mathcal{K}$ has norm 1.

Since $\Gamma$ has property (T) by the Delorme-Guichardet theorem (see \cite[Theorem 2.12.4]{Bekka08}) there is a vector $\eta\in \mathcal{K}$ such that $c(g) = \rho\circ \sigma(g)\eta-\eta.$  Then $\eta\in \mathcal{K}^H$ but $\rho\circ\sigma(\gamma_i)\eta\neq\eta,$ a contradiction.  
 \end{proof}

\begin{corollary}\label{cor:sfnotMF} Let $W$ be as in Theorem \ref{thm:main}.  Then $C^*_r(W)$ is stably finite but not an MF C*-algebra. The same holds for any group containing $W.$
\end{corollary}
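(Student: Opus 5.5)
The corollary has two halves: stable finiteness of $C^*_r(W)$, and failure of the MF property. I would prove them separately, and both arguments pass to any $\tilde W\ge W$.

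\emph{Stable finiteness.} $C^*_r(W)$ carries the canonical faithful tracial state $\tau$. The same holds for $M_k(C^*_r(W))$ with the trace $\tau\otimes \operatorname{Tr}_k$. Suppose $x\in M_k(C^*_r(W))$ satisfies $x^*x=1$. Then $1-xx^*$ is a projection whose trace is $\tau(1-xx^*)=\tau(1-x^*x)=0$. Faithfulness forces $xx^*=1$. This argument works verbatim for any discrete group, so it applies to every $\tilde W\ge W$.

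\emph{Not MF.} Suppose, for contradiction, that $C^*_r(W)$ embeds into $\mathcal{Q}_d=\prod M_{d_n}/\bigoplus M_{d_n}$ through an injective unital $*$-homomorphism $\iota$. We may assume $\iota$ is unital by cutting down by the projection $\iota(1)$. That projection lifts to a sequence of projections in $\prod M_{d_n}$, so the corner $\iota(1)\mathcal{Q}_d\iota(1)$ is again of the form $\mathcal{Q}_{d'}$, up to discarding finitely many zero blocks. Now restrict $\iota$ to the canonical unitaries $\lambda(w)$, $w\in W$. This gives a group homomorphism $\pi:W\to U(\mathcal{Q}_{d'})$. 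Theorem \ref{thm:main} then yields $\pi(b_\gamma)=1$ for $\gamma\in\Gamma\setminus H$; such a $\gamma$ exists because $H$ is a proper subgroup of $\Gamma$. Hence $\iota(\lambda(b_\gamma)-1)=0$. However, $\lambda(b_\gamma)\ne 1$ in $C^*_r(W)$ since $b_\gamma\ne 1$; for instance, the two elements have distinct values under the canonical trace, namely $0$ and $1$. This contradicts injectivity of $\iota$.

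\emph{Groups containing $W$.} For $W\le\tilde W$, the inclusion $C^*_r(W)\subseteq C^*_r(\tilde W)$ is isometric. An MF embedding of $C^*_r(\tilde W)$ would therefore restrict to one of $C^*_r(W)$, and MF passes to C*-subalgebras. The only step needing care is the reduction to a unital embedding, and that is a standard lifting of projections.
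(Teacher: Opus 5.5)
Your proof is correct and follows the paper's argument: the faithful trace gives stable finiteness, and an MF embedding of $C^*_r(W)$ would restrict on the canonical unitaries to an injective homomorphism $W\to U(\mathcal{Q}_d)$, contradicting Theorem~\ref{thm:main}; you are in fact more careful than the paper about unitality. One small slip is that the corner $\iota(1)\mathcal{Q}_d\iota(1)$ may have infinitely many zero blocks, so you should discard all of them (infinitely many nonzero blocks remain since $\iota(1)\neq 0$), or skip the corner altogether by using the injective homomorphism $w\mapsto \iota(\lambda(w))+(1-\iota(1))$ into $U(\mathcal{Q}_d)$.
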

\begin{proof} All reduced C*-algebras of discrete groups admit a faithful trace and are hence stably finite.  If $C^*_r(W)$ were an MF C*-algebra, then $W$ would be an MF group, since $W$ embeds canonically into the unitary group of $C^*_r(W).$
\end{proof}

\begin{example} Examples of non-MF groups and stably finite, non-MF C*-algebras.  As in Example \ref{ex:nonfinite} there are many examples of groups $\Gamma$ satisfying the hypotheses of Theorem \ref{thm:main} and therefore of Corollary \ref{cor:sfnotMF}
 \end{example}

\end{document}